\documentclass[12pt]{article}
\usepackage[T1]{fontenc}
\usepackage[utf8]{inputenc}
\usepackage{lmodern}

\usepackage[margin=1in]{geometry}
\usepackage{amsfonts,amssymb,amsthm,mathtools,amsmath}
\usepackage{enumitem}
\usepackage{microtype}
\usepackage[hidelinks]{hyperref}

\usepackage{tikz}
\usetikzlibrary{positioning,decorations.pathreplacing}

\newtheorem{theorem}{Theorem}[section]
\newtheorem{lemma}[theorem]{Lemma}
\newtheorem{proposition}[theorem]{Proposition}
\newtheorem{corollary}[theorem]{Corollary}
\newtheorem{conjecture}[theorem]{Conjecture}
\newtheorem{question}[theorem]{Question}
\theoremstyle{definition}
\newtheorem{definition}[theorem]{Definition}
\theoremstyle{remark}
\newtheorem{remark}[theorem]{Remark}

\newcommand{\ex}{\operatorname{ex}}
\newcommand{\exbip}{\operatorname{ex}_{\mathrm{bip}}}
\newcommand{\exor}{\operatorname{ex}^{\rightarrow}_{\mathrm{bip}}}

\newcommand{\inc}{\operatorname{inc}}

\title{Bipartite Extremal Numbers of Trees}

\author{
Lucas Waite\thanks{
Department of Mathematics, Kenyon College, Gambier, OH 43022, USA.
Email: waite1@kenyon.edu.
}
\and
Nuh Aydin\thanks{
Department of Mathematics, Kenyon College, Gambier, OH 43022, USA.
Email: aydinn@kenyon.edu.
}
}

\date{July 2026}

\begin{document}

\maketitle

\begin{abstract}
We study a restriction of the classical Erd\H{o}s--S\'os problem, the extremal number of trees, to the class of bipartite host graphs, both when only the order of the host is prescribed and when its two part-sizes are fixed. We give natural lower-bound constructions and formulate corresponding linear upper-bound conjectures. We apply a weighted variant of $k$-minimality to prove upper bounds for a broad family of trees including brooms, trees with part-sizes obeying certain inequalities, and all trees on at most seven vertices, resolving part of a problem of Caro, Patk\'os and Tuza up to additive constants. We also relate the fixed-part extremal number of a tree to the ordinary extremal number, and consider an oriented bipartite extremal function analogous to the Zarankiewicz function.
\end{abstract}

\medskip
\noindent\textbf{Keywords:} bipartite extremal number, bipartite Tur\'an number, trees, Erd\H{o}s--S\'os Conjecture, minimal subgraphs, Zarankiewicz problem

\section{Introduction}

Extremal graph theory asks how many edges a graph may contain while avoiding a prescribed subgraph. For a graph $H$, let $\ex(N;H)$ denote the maximum number of edges in an $H$-free graph on $N$ vertices. When $H$ is nonbipartite, the asymptotic behavior of $\ex(N;H)$ is determined by the chromatic number of $H$. The Erd\H{o}s--Stone--Simonovits theorem states that
\[
\ex(N;H)=\left(1-\frac{1}{\chi(H)-1}+o(1)\right)\frac{N^2}{2}
\]
whenever $\chi(H)\geq 3$; see, for example,
\cite{erdosStone1946,simonovits1968}. When $H$ is bipartite, however, this theorem gives only
\[
\ex(N;H)=o(N^2),
\]
and the correct order of magnitude can depend on detailed structural properties of $H$.

One of the foundational results in bipartite extremal graph theory is the K\H{o}v\'ari--S\'os--Tur\'an theorem, which gives
\[
\ex(N;K_{s,t})=O_{s,t}\left(N^{2-1/s}\right)
\]
for fixed integers $2\leq s\leq t$; see \cite{kovari1954}. Since every fixed bipartite graph is contained in some complete bipartite graph, this also gives a general subquadratic upper bound for every fixed bipartite forbidden graph. The related Zarankiewicz problem asks for the maximum number of edges in a bipartite graph with prescribed part-sizes that avoids a fixed complete bipartite graph.

A key unresolved question in extremal graph theory concerns the extremal numbers of trees. If $T$ is a tree on $t$ vertices, the Erd\H{o}s--S\'os conjecture predicts the optimal linear upper bound.

\begin{conjecture}[Erd\H{o}s--S\'os]
\label{conj:erdos-sos}
If $T$ is a tree on $t$ vertices, then every $T$-free graph $G$ on $N$ vertices satisfies
\[
e(G)\leq \frac{t-2}{2}N.
\]
\end{conjecture}

This conjecture was introduced by Erd\H{o}s \cite{erdos1964extremal}; related results can be found in
\cite{sidorenko2018keyrings,goerlich2016,besomi2021}.
Extremal problems for trees in bipartite host graphs have also appeared in
\cite{erdos1964extremal,kopylov1977,yuan2017,li2021,chen2023,yuan2024}.
A systematic study of bipartite Tur\'an numbers for trees was initiated by Caro, Patk\'os and Tuza \cite{caro2025}.

In this paper, we study the Erd\H{o}s--S\'os problem under the additional assumption that the host graph is bipartite. For a bipartite graph $H$, let $\exbip(N;H)$ denote the maximum number of edges in an $H$-free bipartite graph on $N$ vertices. We also study the finer fixed-part parameter
\[
\exbip(m,n;H),
\]
defined to be the maximum number of edges in an $H$-free bipartite graph with part-sizes $m$ and $n$.

Every tree is bipartite. We write $T_{a,b}=(A,B)$ when $A$ and $B$ are the two parts of a tree $T$, with
\[
|A|=a,\qquad |B|=b,\qquad a\geq b.
\]
The imbalance between $a$ and $b$ plays an important role in the bipartite-host problem. In particular, there are two natural types of lower-bound construction. Comparing these constructions suggests a transition at $a=2b-1$. More precisely, the constructions in Section~\ref{sec:constructions} motivate the following two conjectures.

\begin{conjecture}[Order form]
\label{conj:order}
For every tree $T_{a,b}$,
\[
    \exbip(N;T_{a,b})\le\begin{cases}
        \frac12(a-1)N& \text{if}\ a\geq2b-1,\\
        (b-1)N& \text{if}\ a\leq2b-1.
    \end{cases}
\]
\end{conjecture}

\begin{conjecture}[Fixed-part form]
\label{conj:fixed-parts}
For every tree $T_{a,b}$ and all $n\geq m$,
\[
    \exbip(m,n;T_{a,b})\le(b-1)n+\begin{cases}
    (a-b)m&\text{if}\ a\geq2b-1,\\
    (b-1)m&\text{if}\ a\leq2b-1.
    \end{cases}
\]
\end{conjecture}

Our main result proves Conjecture~\ref{conj:fixed-parts} for a broad family of trees. To state it, we introduce a parameter measuring the concentration of simple pendant branches around a vertex in the smaller part of the tree. For a tree $T$ and a vertex $x\in V(T)$, let $N_T(x)$ denote the set of vertices  of $T$ adjacent to $x$. Define
\[
    \mathcal S_T(x):=\{y\in N_T(x):N_T(y)\setminus\{x\}\text{ consists entirely of leaves}\},
\]
and for $T_{a,b}=(A,B)$, define $\sigma(T_{a,b}):=\max_{x\in B}|\mathcal S_T(x)|$.  Thus, the vertices in $\mathcal S_T(x)$ are precisely the leaves and pendant preleaves adjacent to $x$.

Our main theorem is the following.

\begin{theorem}\label{thm:main}
Let $T_{a,b}=(A,B)$ be a tree with $a\ge b$. If $\sigma(T_{a,b})\ge \min\{a-b,b-1\},$ then for every $n\geq m$,
\[
    \exbip(m,n;T_{a,b})\le(b-1)n+\begin{cases}
    (a-b)m&\text{if}\ a\geq2b-1,\\
    (b-1)m&\text{if}\ a\leq2b-1.
    \end{cases}
\]
\end{theorem}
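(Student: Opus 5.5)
Write $\alpha:=a-b$ if $a\ge 2b-1$ and $\alpha:=b-1$ if $a\le 2b-1$; in both cases $\alpha=\min\{a-b,b-1\}$. The target bound then reads $e(G)\le (b-1)n+\alpha m$. The plan is a weighted minimal-counterexample argument, the weighted variant of $k$-minimality mentioned in the abstract. Fix $n\ge m$ and suppose $G=(X,Y)$ is a bipartite graph with $|X|=m$, $|Y|=n$, $e(G)>(b-1)n+\alpha m$, and $G$ is $T_{a,b}$-free. Give each vertex of $Y$ weight $b-1$ and each vertex of $X$ weight $\alpha$. Pass to a minimal nonempty subgraph $G'\subseteq G$ satisfying
\[
e(G')>(b-1)|Y\cap V(G')|+\alpha|X\cap V(G')|.
\]
The roles of the two sides may swap in subgraphs, and this is exactly why we need a version symmetric enough to handle both orientations; the inequality $b-1\ge a-b$ in the second regime, or the reverse in the first, determines which weighting is harmless. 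Minimality gives degree conditions. Deleting a vertex $v\in Y$ of degree at most $b-1$ preserves the strict inequality, so every vertex of $Y'$ has degree at least $b$. Likewise every vertex of $X'$ has degree at least $\alpha+1$. Deleting a single edge must destroy the strict inequality, so equality fails by exactly one edge.

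Next we embed $T$. Let $x_0\in B$ attain $\sigma(T)=|\mathcal S_T(x_0)|\ge\alpha$. Remove from $T$ the vertices of $\mathcal S_T(x_0)$ together with their leaf children; call the result $T'$. Now embed $T'$ greedily in $G'$ using the degree bounds. Place $B$-vertices on the side with minimum degree large enough to accommodate their $A$-neighbours, namely $X'$ or $Y'$ according to the regime, and $A$-vertices on the other side. Greedy embedding along a BFS order works whenever each $B$-vertex has degree at least the number of $A$-vertices already used, minus one, and symmetrically for $A$-vertices. The reason for deleting the pendant structure at $x_0$ is that $T'$ has few vertices in $A$. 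Its greedy embedding then only needs degrees of about $b$ on one side, which we have, and about $a-\sigma\le a-\alpha$ on the other. In the first regime $a-\alpha=b$; in the second, $a-\alpha=a-b+1\le b$. So both sides only need degree about $b$.

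Finally we re-attach the pendant branches at the image of $x_0$. Each $y\in\mathcal S_T(x_0)$ must be sent to an unused neighbour of the image of $x_0$, with enough fresh neighbours for its leaves. Here the degree-$\ge b$ condition is too weak. Instead we use the counting from minimality, with edge density roughly $(b-1)+\alpha$ per vertex pair, to choose the image of $x_0$ of large degree, and then apply a Hall-type argument to assign the preleaves' leaves.

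I expect this last step to be the main obstacle. The trouble is that the pendant preleaves need distinct private leaf-sets, and the greedy degree bounds do not directly guarantee them. I would first try to choose the root of $T'$ to be $x_0$ and embed $x_0$ at a vertex of maximum degree in $G'$. I would then embed the pendant preleaves before the rest of $T'$, so that the remaining greedy steps, which need only degree $b$, still have room. A final check would confirm the vertex counts in both regimes, since each reduces to $a-\alpha\le b$.
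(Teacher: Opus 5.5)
\textbf{There is a genuine gap, though not where you placed it.} Your skeleton matches the paper's: weighted minimality, deleting $\mathcal S_T(x_0)$ with its pendant leaves, embedding the rest greedily, then re-attaching at a high-degree image of $x_0$. But the step you flag as the main obstacle is trivial, and the step you treat as routine is where the real work lies.

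The leaf children of the preleaves in $\mathcal S_T(x_0)$ are vertices of $B$, and $|B|=b$. Every host vertex on the side receiving $A$ has degree at least $b$. So when such a leaf is placed, at most $b-1$ vertices of $B$ are already used, and the image of its preleaf still has a free neighbour. Likewise, the vertices of $\mathcal S_T(x_0)$ go greedily to free neighbours of $\varphi(x_0)$ as soon as $d(\varphi(x_0))\ge a=|A|$. No Hall-type argument or private leaf-sets are needed, and your claim that degree $\ge b$ is too weak is false. Compare the last paragraph of the proof of Lemma~\ref{lem:embedding}.

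The missing step is locating the high-degree vertex on the correct side. First, your $\alpha$ equals $\max\{a-b,b-1\}$, not the minimum. So the hypothesis gives $\sigma\ge a-\alpha-1$, not $\sigma\ge\alpha$. The $B$-vertices of $T'$ need host degree $|A'|=a-\sigma$, which can be $a-b+1>b$ when $a\ge 2b$. Hence $B$ and $x_0$ must go to the $\alpha$-weighted side $X'$, whose minimum degree $\alpha+1$ is exactly enough, and you need a vertex of degree $\ge a$ there.

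With your fixed-orientation property, the degree sum over $X'$ exceeds $\alpha|X'|+(b-1)|Y'|$. This forces such a vertex when $|Y'|\ge|X'|$, but not otherwise. For example, take $b=2$ and $a\ge 3$: $K_{a,a-1}$ with its $a$-vertex part in $X$ is minimal for your property, yet every vertex of $X'$ has degree $a-1$.

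The paper closes this gap by making density orientation-free: the parts are relabelled so that the larger one always receives the smaller weight $b-1$. Lemma~\ref{lem:incidence} then proves that deleting a low-degree vertex still preserves density; its case $|U|=|V|$, where a deletion flips which part is larger, uses $s\ge r$. Minimality then yields $\delta(U')\ge b$ and $\delta(V')\ge\alpha+1$, and, since $|U'|\ge|V'|$, also $\Delta(V')\ge b+\alpha\ge a$ (Corollary~\ref{cor:degrees}). You noted that a symmetric version is needed, but you never formulated it or checked that the degree bounds survive under it.
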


Theorem~\ref{thm:main} applies, in particular, to stars, paths, double-stars, brooms, all trees whose part-sizes $a$ and $b$ obey any of $a\leq b+1$, $a\geq b(b-1)+1$ or $b\leq 2$, and all trees on at most seven vertices. Here, a \emph{broom} is a tree obtained from a vertex-disjoint star and a path by joining the center of the star to a leaf of the path, and a \emph{double-star} is a tree obtained by joining the centers of two stars. Caro, Patk\'os and Tuza asked for the determination of the fixed-part bipartite extremal numbers of all trees on seven vertices \cite{caro2025}. Our theorem settles the ordinary version of this problem up to an additive constant.

The main tool used to prove Theorem~\ref{thm:main} is a weighted version of the standard minimal-subgraph argument. A familiar method for embedding a tree is to pass from a graph of sufficiently large average degree to an inclusion-minimal subgraph satisfying the relevant edge inequality. Minimality then forces a lower bound on the minimum degree, since deleting a vertex of insufficient degree would preserve the edge inequality.

We conclude the paper with two related directions. First, we compare the fixed-part bipartite extremal number with the ordinary extremal number by taking a random bipartite subgraph. Second, we introduce an oriented fixed-part parameter
\[
\exor(m,n;H_{a,b}),
\]
in which the two parts of the forbidden graph are required to embed into prescribed parts of the host. When $H_{a,b}$ is complete bipartite, this is the usual Zarankiewicz function. For every tree $T_{a,b}$, we prove upper and lower bounds for $\exor(m,n;T_{a,b})$ whose difference is at most $(a-1)(b-1)$.

\section{Notation}
\label{sec:notation}

We use primarily standard notation. All graphs considered in this paper are undirected, finite, and simple. For a graph $G$, write $d_G(v)$ for the degree of $v$, and write $\delta(G)$ and $\Delta(G)$ for its minimum and maximum degree. If $G=(U,V)$ is bipartite, define
\[
    \delta_G(U):=\min_{u\in U}d_G(u),
    \qquad
    \Delta_G(U):=\max_{u\in U}d_G(u),
\]
and similarly for $V$. We  say $G$ has \emph{part-sizes} $|U|$ and $|V|$. For a graph $G$ and a vertex subset $S\subseteq V(G)$ we write $G-S$ for the induced graph $G[V(G)\setminus S]$ and $N_G(S)$ for the set of vertices adjacent to some vertex in $S$. A \emph{leaf} is a vertex of degree one. Throughout the paper, we use subscript notation $T_{a,b}$ or $H_{a,b}$ to indicate a bipartite graph with part-sizes $a\geq b$. In the case of $T_{a,b}$, the graph is also assumed to be a tree. The symbol $\dot\bigcup$ indicates a disjoint union.
\section{Constructions}
\label{sec:constructions}

We begin with some constructions to lower bound our extremal parameters. We note that these bounds are not tight in full generality, but appear to be tight in terms of the linear coefficients of $N,m$ and $n$. The first applies when only the total order of the host is prescribed, while the second applies when its part-sizes are fixed.

\begin{proposition}[Basic lower bounds]
\label{prop:lower-bounds}
Let \(H_{a,b}\) be a connected bipartite graph with part-sizes \(a\geq b\) and \(a\geq2\). Write $N=2q(a-1)+r$ where $0\leq r<2(a-1)$ and $N\geq b-1$. Then

\[
\exbip(N;H_{a,b}) \geq
\begin{cases}
q(a-1)^2+\left\lfloor\frac{r^2}{4}\right\rfloor &\text{if } q(a-1)(a-2b+1)+\left\lfloor\frac{r^2}{4}\right\rfloor+(b-1)^2\geq (b-1)r,\\[3mm]
(b-1)(N-b+1)&\text{otherwise}.
\end{cases}
\]

\noindent Now suppose that \(n\geq m\geq b\), and write $m-b+1=q(a-1)+r$ where $0\leq r<a-1$. Then

\[
\exbip(m,n;H_{a,b})\ge
\begin{cases}
(b-1)(n-m+b-1)+q(a-1)^2+r^2
&
\begin{aligned}[t]
\text{if }&
q(a-1)(a-2b+1)\\
&+(r-b+1)^2\ge0,
\end{aligned}
\\[3mm]
(b-1)(m+n-2b+2)
&
\text{otherwise.}
\end{cases}
\]
\end{proposition}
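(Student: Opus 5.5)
We prove each bound by exhibiting explicit $H_{a,b}$-free bipartite graphs, and in each case we take the better of two constructions. Every construction rests on the same observation. Since $H_{a,b}$ is connected with parts of sizes $a\ge b$, any copy of $H_{a,b}$ lies inside a single component of the host. Its parts map into the two sides of that component, so both sides must contain at least $b$ vertices and some side must contain at least $a$ vertices.

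\emph{Construction 1 (disjoint balanced blocks).} Take a disjoint union of complete bipartite graphs $K_{s,t}$ with $s,t\le a-1$. Each component has both sides of size at most $a-1$, so no side can host the $a$-part of $H_{a,b}$, and the union is $H_{a,b}$-free.
\begin{itemize}
\item For the order form, use $q$ copies of $K_{a-1,a-1}$ and one copy of $K_{\lceil r/2\rceil,\lfloor r/2\rfloor}$. This is valid because $r<2(a-1)$ forces both sides of the last block to be at most $a-1$. The edge count is $q(a-1)^2+\lfloor r^2/4\rfloor$.
\item For the fixed-part form, first set aside a $K_{b-1,\,n-m+b-1}$, whose $b-1$ vertices lie in the $m$-side. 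This leaves $m-b+1$ vertices on each side.
\item Cover these remaining vertices with $q$ copies of $K_{a-1,a-1}$ and one copy of $K_{r,r}$.
\item The set-aside component has a side of size $b-1$, so it cannot host the $b$-part and is $H$-free. The remaining blocks are $H$-free by the observation above.
\item The total is $(b-1)(n-m+b-1)+q(a-1)^2+r^2$. Note that this uses $m\ge b$.
\end{itemize}

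\emph{Construction 2 (small side of size $b-1$).} Take $K_{b-1,N-b+1}$, which needs $N\ge b-1$. It has $(b-1)(N-b+1)$ edges and is $H$-free, because one side has fewer than $b$ vertices. For fixed parts, use a complete bipartite graph between $b-1$ vertices of the $m$-side and all $n$ vertices, together with $b-1$ vertices of the $n$-side joined to the remaining $m-b+1$ vertices of the $m$-side.
\begin{itemize}
\item The result is a single connected double-star-like graph. It has $(b-1)n+(b-1)(m-b+1)=(b-1)(m+n-2b+2)$ edges.
\item To see that it is $H$-free: every edge meets the set $S$ of $2b-2$ chosen vertices, $b-1$ on each side.
\item Suppose $H_{a,b}$ embeds, and let $X$ be the image of the $b$-part. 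By the argument below, every vertex of $X$ must lie in $S$. Since $X$ lies on one side of the host, it is contained in the $b-1$ chosen vertices of that side, contradicting $|X|=b$.
\end{itemize}

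The main subtlety is making this $H$-freeness argument rigorous, because the host is connected and $H$ could in principle straddle both sides.
\begin{itemize}
\item A vertex of $H$ in the $b$-part that is mapped outside $S$ has all its neighbours inside $S$. Those neighbours must be the chosen vertices on the opposite side, so there are at most $b-1$ of them.
\item A cleaner route is a vertex-cover argument. $S$ is a vertex cover of the host with exactly $b-1$ vertices per side, so $S$ would induce a vertex cover of $H$.
\item A vertex cover of a connected bipartite graph with parts $A,B$ ($|A|=a\ge b=|B|$) has at least $b$ vertices when $a\ge b$, by König's theorem applied to a matching of size $b$. This needs a matching saturating $B$, which is not automatic; a star, for instance, has cover number $1$.
\item So instead argue side-wise. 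The cover restricted to the images of $A$ and of $B$ must have at most $b-1$ vertices in each host side. Connectivity means $H$'s $A$-part lies in one host side $P$ and its $B$-part in the other side $Q$.
\item The vertices of $B$ mapped to $Q\setminus S$ have all neighbours in $S\cap P$. The remaining vertices of $B$ lie in $S\cap Q$, so there are at most $b-1$ of them, and hence at least one vertex of $B$ lies outside $S$.
\end{itemize}
I expect this structural step, showing that no embedding of $H$ exists in the connected Construction 2 for fixed parts, to be the main obstacle. If it fails for general connected $H$, I would fall back on taking $K_{b-1,n}\,\dot\cup\,K_{m-b+1,b-1}$ arranged as two components, which has the same edge count, and check that the part-sizes still fit. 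This alternative is available when $n\ge 2b-2$ holds suitably.

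\emph{Choosing the better construction.} Finally, compare the two counts. In the order form, the first count is at least $(b-1)(N-b+1)$ exactly when
\[
q(a-1)^2+\lfloor r^2/4\rfloor-(b-1)(2q(a-1)+r)+(b-1)^2\ge 0,
\]
which simplifies to the stated inequality. The fixed-part condition is analogous: subtracting the two counts gives $q(a-1)(a-2b+1)+(r-b+1)^2\ge0$. Taking the maximum of the two constructions yields both displayed bounds.
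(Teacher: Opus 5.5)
You have a genuine gap in the fixed-part version of Construction~2. Your Construction~1 in both forms, the order-form graph $K_{b-1,N-b+1}$, and the final comparison algebra all agree with the paper.

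\textbf{The connected graph contains $H$.} Write $S_M$ and $S_N$ for your two chosen $(b-1)$-sets and $M'$, $N'$ for the remaining vertices of each side. Your graph makes $S_M$ complete to $N=S_N\cup N'$ and $S_N$ complete to $M'$. It is not $H_{a,b}$-free in general, and the bound must hold for every connected $H_{a,b}$. For every $b\ge 2$ it contains the path $P_{2b}$, whose two parts both have size $b$, namely
\[
x'-s_1-t_1-s_2-t_2-\cdots-s_{b-1}-t_{b-1}-y',
\qquad x'\in M',\ s_i\in S_N,\ t_i\in S_M,\ y'\in N'.
\]
The vertices $x'$ and $y'$ exist because $m,n\ge b$. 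For $b=2$ this is just a $P_4$ in a double star.

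\textbf{Where your freeness argument breaks.} The claim that the image of the $b$-part must lie in $S$ is false. Your side-wise argument only shows that some vertex of the $b$-part lies outside $S$. That is no contradiction, since such a vertex may have fewer than $b$ neighbours, as the endpoints of the path above do.

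\textbf{The edge count is also wrong.} In fact
\[
(b-1)n+(b-1)(m-b+1)=(b-1)(m+n-b+1),
\]
which exceeds $(b-1)(m+n-2b+2)$ by exactly $(b-1)^2$. Those surplus edges are the $S_M$--$S_N$ edges, and they are precisely the edges the path uses.

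\textbf{The fallback does not fit.} $K_{b-1,n}\,\dot\cup\,K_{m-b+1,b-1}$ needs $n+b-1$ vertices on the $n$-side, for every $b\ge 2$ and every $n$.

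\textbf{The fix.} Delete the $S_M$--$S_N$ edges, so the two pieces become disjoint components. Take $K_{b-1,m-b+1}\,\dot\cup\,K_{b-1,n-b+1}$, with the $(m-b+1)$-side of the first component and the $(b-1)$-side of the second in the $m$-side. The sides then have exactly $(m-b+1)+(b-1)=m$ and $(b-1)+(n-b+1)=n$ vertices.

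Each component has a side of size $b-1$. Your opening observation, that both sides of the component hosting $H_{a,b}$ need at least $b$ vertices, then gives $H_{a,b}$-freeness immediately. The edge count is exactly $(b-1)(m+n-2b+2)$, which is what your comparison inequality already uses. This is the paper's construction. With it, no vertex-cover or structural argument is needed, and the rest of your proof goes through.
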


\begin{proof}
We first consider $\exbip(N;H_{a,b})$:
    Let $G=K_{b-1,N-b+1}$. One part of $G$ has size $b-1$, so $G$ cannot contain $H_{a,b}$. Hence $e(G)=(b-1)(N-b+1).$ Now take the disjoint union of $\left\lfloor\frac{N}{2(a-1)}\right\rfloor$ copies of $K_{a-1,a-1}$ and a balanced complete bipartite graph on the remaining $r$ vertices. Every component has both parts of size at most $a-1$, so the resulting graph is $H_{a,b}$-free. Its number of edges is
    \[
        \left\lfloor\frac{N}{2(a-1)}\right\rfloor(a-1)^2
        +\left\lfloor\frac{r^2}{4}\right\rfloor.
    \]
    The stated lower bound on $\exbip(N;H_{a,b})$ comes from maximizing over these two cases.

\noindent We now move to $\exbip(m,n;H_{a,b})$:
    Let $G=K_{b-1,m-b+1}\,\dot\bigcup\,K_{b-1,n-b+1}$, placing the two components in opposite orientations across the prescribed bipartition:
    
\begin{figure}[ht]
    \centering
    \begin{tikzpicture}[
        x=0.9cm,
        y=0.65cm,
        vertex/.style={
            circle,
            fill=black,
            inner sep=1.05pt
        },
        edge/.style={
            line width=0.4pt
        },
        block/.style={
            fill=black!14,
            draw=none
        },
        every node/.style={
            font=\small
        }
    ]

    \path[block] (-0.28,0.48) rectangle (0.28,2.12);
    \path[block] (3.72,0.72) rectangle (4.28,1.88);

    \path[block] (-0.28,-1.88) rectangle (0.28,-0.72);
    \path[block] (3.72,-2.12) rectangle (4.28,-0.48);

    \node[vertex] (u1) at (0,1.88) {};
    \node[vertex] (u2) at (0,1.30) {};
    \node[vertex] (u3) at (0,0.72) {};

    \node[vertex] (v1) at (4,1.64) {};
    \node[vertex] (v2) at (4,0.96) {};

    \node[vertex] (u4) at (0,-0.96) {};
    \node[vertex] (u5) at (0,-1.64) {};

    \node[vertex] (v3) at (4,-0.72) {};
    \node[vertex] (v4) at (4,-1.30) {};
    \node[vertex] (v5) at (4,-1.88) {};

    \foreach \i in {1,2,3}
        \foreach \j in {1,2}
            \draw[edge] (u\i) -- (v\j);

    \foreach \i in {4,5}
        \foreach \j in {3,4,5}
            \draw[edge] (u\i) -- (v\j);

    \foreach \i in {1,2,3,4,5}
        \fill (u\i) circle (1.05pt);

    \foreach \i in {1,2,3,4,5}
        \fill (v\i) circle (1.05pt);

    \node at (0,2.55) {$M$};
    \node at (4,2.55) {$N$};

    \node[anchor=east] at (-0.42,1.30) {$m-b+1$};
    \node[anchor=west] at (4.42,1.30) {$b-1$};

    \node[anchor=east] at (-0.42,-1.30) {$b-1$};
    \node[anchor=west] at (4.42,-1.30) {$n-b+1$};

    \end{tikzpicture}

    \caption{The construction
    \(G=K_{b-1,m-b+1}\,\dot\cup\,
    K_{b-1,n-b+1}\), with the two components oriented oppositely
    across the prescribed bipartition \(M\cup N\). Each shaded block
    schematically represents the indicated number of vertices.}
    \label{fig:fixed-part-lower-bound}
\end{figure}
    
    Each component has a part of size $b-1$, so $G$ is $H_{a,b}$-free. Moreover,
    \[
        e(G)=(b-1)(m-b+1)+(b-1)(n-b+1)=(b-1)(m+n-2b+2).
    \]
    Now for the other side of this bound take a component $K_{b-1,n-m+b-1}$, oriented so that its part of order $n-m+b-1$ lies in the part of $G$ of order $n$. On the remaining $m-b+1$ vertices of each side, take $q$ disjoint copies of $K_{a-1,a-1}$ and one copy of $K_{r,r}$. Every component either has a part of size $b-1$ or has both parts of size at most $a-1$, and hence the graph is $H_{a,b}$-free. Counting edges gives
    \[
        (b-1)(n-m+b-1)+q(a-1)^2+r^2.
    \]
    The stated lower bound on $\exbip(m,n;H_{a,b})$ again comes from maximizing over these two cases.
\end{proof}

Observe that in the above constructions dominance is exchanged asymptotically at $a=2b-1$. The preceding constructions establish that the bounds proposed in Conjectures~\ref{conj:order} and \ref{conj:fixed-parts} are sharp up to additive constants depending only on the forbidden tree. We do not expect the bounds in Conjectures~\ref{conj:order} and \ref{conj:fixed-parts} to be exact in general.

\section{Main Results}
\label{sec:main}
The key tool that we use in proving the main result is a weighted version of the familiar $k$-minimality.

\begin{definition}
Let $\mathcal P$ be a graph property. A graph $G$ is \emph{$\mathcal P$-minimal} if $G$ has property $\mathcal P$ but no proper subgraph of $G$ has property $\mathcal P$.
\end{definition}

The following lemma is a general statement of a commonly used argument in graph theory.
\begin{lemma}[Finite descent]\label{lem:finite-descent}
If a finite graph $G$ has a property $\mathcal P$, then $G$ contains a $\mathcal P$-minimal subgraph.
\end{lemma}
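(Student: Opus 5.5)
The plan is to argue by strong induction on the number of vertices and edges, that is, by well-ordering. We consider the family $\mathcal F$ of all subgraphs $H\subseteq G$, including $G$ itself, that have property $\mathcal P$. Since $G\in\mathcal F$, this family is nonempty. Since $G$ is finite, it has only finitely many subgraphs, so $\mathcal F$ is a finite nonempty set.

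We then choose $H\in\mathcal F$ minimizing the quantity $|V(H)|+|E(H)|$. We claim this $H$ is $\mathcal P$-minimal. By construction, $H$ has property $\mathcal P$. Let $H'$ be a proper subgraph of $H$. Then $H'$ is also a subgraph of $G$. Moreover, it has either strictly fewer vertices or the same vertices and strictly fewer edges, so $|V(H')|+|E(H')|<|V(H)|+|E(H)|$. If $H'$ had property $\mathcal P$, it would lie in $\mathcal F$, contradicting the minimal choice of $H$. Hence no proper subgraph of $H$ has $\mathcal P$, and $H$ is a $\mathcal P$-minimal subgraph of $G$.

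An equivalent descent formulation works as follows. Starting from $G$, if the current graph is not $\mathcal P$-minimal, we pass to a proper subgraph having $\mathcal P$. Each step strictly decreases the nonnegative integer $|V|+|E|$, so the process terminates, and it can only terminate at a $\mathcal P$-minimal subgraph. There is no real obstacle here. The only point needing care is using a measure that strictly decreases under taking proper subgraphs; $|V|+|E|$ works, whereas edge count alone would not, since deleting an isolated vertex leaves the edge count unchanged.
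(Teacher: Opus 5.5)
Your proof is correct and is essentially the paper's argument: the paper takes an inclusion-minimal member of the finite, nonempty family of subgraphs of $G$ having property $\mathcal P$, and minimizing $|V(H)|+|E(H)|$ is an explicit way of producing such an element. Your point that edge count alone would not work as the measure, because of isolated vertices, is a correct detail that the paper leaves implicit.
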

\begin{proof}
Among all subgraphs of $G$ having property $\mathcal P$, choose one that is minimal under the partial order of inclusion. Such a subgraph is guaranteed to exist because $G$ is finite.
\end{proof}

\begin{definition}\label{def:weighted-minimality}
Let $1\le r\le s$ be integers. A bipartite graph $G=(U,V)$ has property \emph{$(r,s)$} if $|U|\ge |V|$ and
\[
    e(G)>(r-1)|U|+(s-1)|V|.
\]
We say such graphs are \emph{$(r,s)$}-dense.
A bipartite graph $G$ is therefore \emph{$(r,s)$-minimal} if it is $(r,s)$-dense and no proper subgraph is $(r,s)$-dense after its two inherited parts are relabeled so that the larger part comes first.
\end{definition}

\noindent For a graph $G$ and a vertex subset $S\subseteq V(G)$, we  let
\[
    \inc_G(S):=\bigl|\{e\in E(G):e\cap S\ne\emptyset\}\bigr|
\]
denote the number of edges incident with at least one vertex of $S$. Thus, $e(G-S)=e(G)-\inc_G(S)$. We begin with some useful properties of $(r,s)$-minimal graphs.
\begin{lemma}
\label{lem:incidence}
Let $G=(U,V)$ be $(r,s)$-minimal where $r\leq s$. If $X\subseteq U$ and $Y\subseteq V$ satisfy $|X|\le |Y|+1$ and $X\cup Y\not=\emptyset$, then
\[
    \inc_G(X\cup Y)>(r-1)|X|+(s-1)|Y|.
\]
\end{lemma}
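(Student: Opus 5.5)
The plan is to compare $G$ with the subgraph $G'=G-(X\cup Y)$ and use $(r,s)$-minimality directly. Since $X\cup Y\neq\emptyset$, the subgraph $G'$ is proper. Its inherited parts are $U'=U\setminus X$ and $V'=V\setminus Y$, and
\[
e(G')=e(G)-\inc_G(X\cup Y).
\]
By minimality, $G'$ is not $(r,s)$-dense once its parts are relabeled so that the larger one comes first. The goal is to turn this into the single inequality
\[
e(G')\le (r-1)|U'|+(s-1)|V'|,
\]
regardless of which part of $G'$ is larger.

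\textbf{Step 1: the inequality in both orientations.}
\begin{itemize}
\item If $|U'|\ge |V'|$, then no relabeling is needed, and the failure of density gives the inequality immediately.
\item If $|U'|<|V'|$, then after relabeling the failure of density reads $e(G')\le (r-1)|V'|+(s-1)|U'|$. Because $r\le s$ and $|V'|>|U'|$,
\[
\bigl[(r-1)|U'|+(s-1)|V'|\bigr]-\bigl[(r-1)|V'|+(s-1)|U'|\bigr]=(s-r)\bigl(|V'|-|U'|\bigr)\ge 0 .
\]
So the relabeled bound is at least as strong as the one we want, and the inequality holds here too.
\end{itemize}
The degenerate case where $G'$ has no vertices also satisfies the inequality, since then both sides are $0$.

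\textbf{Step 2: subtract from the density of $G$.} Since $G$ is $(r,s)$-dense with its given labeling, $e(G)>(r-1)|U|+(s-1)|V|$. Subtracting the inequality from Step 1 gives
\[
\inc_G(X\cup Y)=e(G)-e(G')>(r-1)\bigl(|U|-|U'|\bigr)+(s-1)\bigl(|V|-|V'|\bigr)=(r-1)|X|+(s-1)|Y|,
\]
which is the claim.

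\textbf{Where I expect difficulty.} The main point to handle carefully is the relabeling in the definition of minimality, which is exactly the comparison in Step 1. I do not expect the hypothesis $|X|\le|Y|+1$ to be needed for this argument. I would re-examine Definition~\ref{def:weighted-minimality} for a hidden convention under which it matters, for instance a requirement that both parts be nonempty. If such a convention exists, it would be absorbed by checking that $U'$ stays nonempty or that the bound holds trivially. Otherwise I would simply record that the lemma holds under the weaker assumption that $X\cup Y$ is nonempty.
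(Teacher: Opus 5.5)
Your proof is correct and follows the paper's argument: delete $X\cup Y$, invoke minimality, and handle the case where $V'$ becomes the larger part via the comparison $(s-r)\bigl(|V'|-|U'|\bigr)\ge 0$. Your observation that $|X|\le|Y|+1$ is unnecessary is also right. The paper uses that hypothesis only to force $|U|=|V|$ and $|X|=|Y|+1$ in the case $|U'|<|V'|$, whereas your comparison handles that case in full generality.
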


\begin{proof}
Suppose instead that $\inc_G(X\cup Y)\le (r-1)|X|+(s-1)|Y|$. Set $U'=U\setminus X$, $V'=V\setminus Y$, and $G'=G-(X\cup Y)$. We show that $G'$ contradicts the minimality of $G$ by considering two cases.

\noindent If $|U|-|V|\geq |X|-|Y|$, then
\begin{align*}
    e(G')&>(r-1)\bigl(|U|-|X|\bigr)+(s-1)\bigl(|V|-|Y|\bigr)\\
    &=(r-1)|U'|+(s-1)|V'|,
\end{align*}
and since
\[
|U'|=|U|-|X|\geq|V|-|Y|=|V'|,
\]
$G'$ is therefore $(r,s)$-dense.

\noindent If instead $|U|-|V|<|X|-|Y|\leq1$, then we must have $|X|=|Y|+1$ and $|U|=|V|$, so
\begin{align*}
    e(G')&>(r-1)\bigl(|U|-|X|\bigr)+(s-1)\bigl(|V|-|Y|\bigr)\\
    &=(r-1)\bigl(|V|-|X|\bigr)+(s-1)\bigl(|U|-|Y|\bigr)\\
    &\geq(r-1)\bigl(|V|-|Y|\bigr)+(s-1)\bigl(|U|-|X|\bigr)\\
    &=(r-1)|V'|+(s-1)|U'|,
\end{align*}
where the second-to-last line follows since $s\geq r$ and $|X|\geq|Y|$ therefore $(s-r)(|X|-|Y|)\geq0$. We also have 
\[
|V'|=|V|-|Y|>|U|-|X|=|U'|,
\]
and $G'$ is therefore $(r,s)$-dense. 

In either case $G'$ is $(r,s)$-dense, thus $G$ has a proper $(r,s)$-dense subgraph. By way of a contradiction, we must have $\inc_G(X\cup Y)>(r-1)|X|+(s-1)|Y|$.
\end{proof}

\begin{corollary}\label{cor:degrees}
If $G=(U,V)$ is $(r,s)$-minimal and $r\leq s$, then
\[
    \delta_G(U)\ge r,
    \qquad
    \delta_G(V)\ge s,
    \qquad
    \Delta_G(V)\ge r+s-1.
\]
\end{corollary}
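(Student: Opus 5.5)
The three inequalities will all come from Lemma~\ref{lem:incidence}, applied to small sets $X\cup Y$ with $|X|\le|Y|+1$ chosen so that $\inc_G(X\cup Y)$ is easy to compute.

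\emph{Minimum degree in $U$.} Fix $u\in U$ and apply Lemma~\ref{lem:incidence} with $X=\{u\}$ and $Y=\emptyset$. The condition $|X|=1\le|Y|+1=1$ holds and $X\cup Y\neq\emptyset$, so
\[
d_G(u)=\inc_G(\{u\})>r-1 .
\]
Hence $d_G(u)\ge r$.

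\emph{Minimum degree in $V$.} Fix $v\in V$ and take $X=\emptyset$, $Y=\{v\}$. Clearly $0\le 2$, so the lemma gives $d_G(v)>s-1$, that is, $d_G(v)\ge s$.

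\emph{Maximum degree in $V$.} This is the one step needing a choice of sets. We want $\inc_G$ to be a sum of degrees of two vertices on the same side, with no double counting. Two vertices of $V$ are never adjacent, so for distinct $v,v'\in V$ we have $\inc_G(\{v,v'\})=d_G(v)+d_G(v')$. First we need $|V|\ge2$.
\begin{itemize}[label={}]
\item Since $G$ is $(r,s)$-dense, $e(G)>0$, so $V\ne\emptyset$.
\item Suppose $|V|=1$. Then $e(G)\le |U|$.
\item If $r\ge2$, this contradicts $e(G)>(r-1)|U|+(s-1)\ge |U|$.
\item If $r=1$, then $e(G)>s-1$ and every vertex of $U$ has degree $1$ (its only possible neighbour is the single vertex of $V$, and $\delta_G(U)\ge r=1$). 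Deleting one vertex of $U$ leaves a graph with $|U|-1\ge1=|V|$ vertices in $U$ and $e(G)-1$ edges. Since $e(G)=|U|$, this is $(1,s)$-dense unless $|U|-1\le s-1$. In that case $e(G)=|U|\le s$, which together with $e(G)>s-1$ forces $|U|=s$. Then $\Delta_G(V)=s=r+s-1$ and the bound holds directly.
\end{itemize}
So in the remaining case we may take distinct $v,v'\in V$. Set $X=\emptyset$ and $Y=\{v,v'\}$; the condition $0\le 3$ holds. Lemma~\ref{lem:incidence} gives
\[
d_G(v)+d_G(v')>2(s-1).
\]
This only yields a maximum degree of at least $s$, so pairs from $V$ alone are not enough.

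To reach $r+s-1$ we use one vertex from each side instead, accepting some double counting. Take $X=\{u\}$ and $Y=\{v\}$ with $u\in U$, $v\in V$ and $uv\in E(G)$; such an edge exists since $e(G)>0$. The condition $|X|=1\le 2$ holds, and
\[
\inc_G(\{u,v\})=d_G(u)+d_G(v)-1>(r-1)+(s-1).
\]
Thus $d_G(u)+d_G(v)\ge r+s$ for every edge $uv$.

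This alone still does not control $d_G(v)$, so we use the freedom in choosing $u$. Let $v\in V$ and choose $u\in N_G(v)$ of minimum degree. If some neighbour $u$ of $v$ has $d_G(u)=1$, then $d_G(v)\ge r+s-1$ and we are done. If every vertex of $U$ has degree greater than $r$, we instead count edges.

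\emph{Main obstacle.} The main obstacle is this final case. The plan is to take $X$ to be a set of low-degree vertices in $U$ together with $Y=\{v\}$, and reach a contradiction by averaging over the sets $\{v\}\cup N_G(v)$. We expect this step to need the most care.
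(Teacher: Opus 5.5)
\textbf{Gap: the third inequality $\Delta_G(V)\ge r+s-1$ is never proved.} Your first two inequalities are correct and are exactly the paper's argument: Lemma~\ref{lem:incidence} with $X=\{u\},Y=\emptyset$, and then with $X=\emptyset,Y=\{v\}$.

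For the third bound, the pair estimate $d_G(v)+d_G(v')>2(s-1)$ and the edge estimate $d_G(u)+d_G(v)\ge r+s$ do not give it. Your case split is not exhaustive: ``some neighbour of $v$ has degree $1$'' versus ``every vertex of $U$ has degree greater than $r$'' leaves out, for instance, $r\ge 2$ with some vertex of $U$ of degree exactly $r$. The final step is only a stated plan.

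More seriously, no argument that applies Lemma~\ref{lem:incidence} only to small sets $X\cup Y$ can work. Take an $(r,s)$-biregular bipartite graph of large girth, which has $|U|\ge|V|$ because $r|U|=s|V|$. Every small $X\cup Y$ induces a forest, so
\[
\inc_G(X\cup Y)\ge r|X|+s|Y|-(|X|+|Y|-1)>(r-1)|X|+(s-1)|Y|.
\]
Yet this graph has $\Delta_G(V)=s<r+s-1$ when $r\ge 2$. What it fails is the global density inequality. So the third bound is genuinely global, and it must use the total edge count together with $|U|\ge|V|$.

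\textbf{Missing idea: averaging over $V$.} The bound needs no minimality, only that $G$ is $(r,s)$-dense. Every edge has exactly one end in $V$, so
\[
\sum_{v\in V}d_G(v)=e(G)>(r-1)|U|+(s-1)|V|\ge (r+s-2)|V|,
\]
using $|U|\ge|V|$ and $r\ge 1$. Hence some $v\in V$ has $d_G(v)>r+s-2$, that is, $d_G(v)\ge r+s-1$. This is the paper's argument.

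If you want to stay with your tool, Lemma~\ref{lem:incidence} with the large set $Y=V$ and any $X\subseteq U$ of size $\min\{|U|,|V|+1\}$ gives the same inequality, since then $\inc_G(X\cup Y)=e(G)$ and $|X|\ge|V|$. Either way, your separate treatment of $|V|=1$ becomes unnecessary.
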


\begin{proof}
Applying Lemma~\ref{lem:incidence} with $X=\{u\}$ and $Y=\emptyset$ gives $d_G(u)>r-1$ for every $u\in U$, and hence $\delta_G(U)\ge r$. Similarly, taking $X=\emptyset$ and $Y=\{v\}$ yields $\delta_G(V)\ge s$. Lastly,
\[
    e(G)>(r-1)|U|+(s-1)|V|
    \ge (r+s-2)|V|.
\]
The average degree of a vertex in $V$ is therefore greater than $r+s-2$, so some vertex of $V$ has degree at least $r+s-1$.
\end{proof}

\noindent We have the following embedding lemma that shows we can embed a large class of trees satisfying a particular condition.

\begin{lemma}
\label{lem:embedding}
Let $T_{a,b}=(A,B)$ be a tree with $a\ge b$, and let $G=(U,V)$ be bipartite. Suppose
\[
    \delta_G(U)\ge b\qquad\text{and}\qquad\delta_G(V)\ge a-\sigma(T_{a,b}).
\]
Choose $t\in B$ with $|\mathcal S_T(t)|=\sigma(T_{a,b})$. If $v\in V$ satisfies $d_G(v)\ge a$, then $G$ contains an embedding of $T_{a,b}$ that maps $t$ to $v$.
\end{lemma}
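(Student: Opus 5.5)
Set $\sigma=\sigma(T_{a,b})$ and $S=\mathcal S_T(t)$, so $|S|=\sigma$. The plan is a greedy embedding that postpones $S$ and the vertices hanging off $S$ to the very end. Let $T'$ be the subtree obtained from $T$ by deleting every vertex of $S$ together with the leaf neighbours (other than $t$) of those vertices of $S$ that are preleaves. Since every vertex of $S$ lies in $A$ and every leaf hanging off it lies in $B$, the tree $T'$ is connected and contains $t$. Write $L$ for the set of pendant leaves removed in this way; then $|A\cap V(T')|=a-\sigma$ and $|B\cap V(T')|=b-|L|$.

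\emph{Step 1: embed $T'$ greedily.} Map $t\mapsto v$ and extend along a breadth-first order of $T'$ rooted at $t$, so that each new vertex has exactly one already-embedded neighbour, its parent. Consider a new vertex $y\in A$ with parent $x\in B\setminus\{t\}$; its image must lie in $N_G(\phi(x))\subseteq U$. At that moment at most $a-\sigma-1$ vertices of $A\cap V(T')$ other than $y$ are embedded, while $d_G(\phi(x))\ge\delta_G(V)\ge a-\sigma$, so a free neighbour exists. Now consider a new vertex $z\in B$ with parent in $A$; its image must lie in $V$. At most $b-|L|-1$ vertices of $B$ are embedded so far, and $\delta_G(U)\ge b$, so a free neighbour exists. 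Children of $t$ itself in $T'$ are handled by $d_G(v)\ge a$.

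\emph{Step 2: embed $S$ at $v$.} The vertices of $S$ must go to unused neighbours of $v$. At most $a-\sigma$ vertices of $A$ are used, namely those of $T'$, and $d_G(v)\ge a$. Hence at least $\sigma$ neighbours of $v$ remain free, and we map $S$ injectively onto them.

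\emph{Step 3: attach the leaves in $L$.} Each $y\in S$ has its leaves, lying in $B$, embedded into $N_G(\phi(y))\setminus\{v\}$, which lies in $V$. We process the leaves one at a time. When a leaf of $y$ is placed, fewer than $b$ vertices of $B$ have been used, including $t$ and the earlier leaves of $y$. Since $d_G(\phi(y))\ge b$, some neighbour of $\phi(y)$ is still free.

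The main thing to verify is the counting in Step 1 at vertices of $A$. The argument relies on the fact that the deleted vertices $S$ all belong to $A$ and are adjacent to $t$, so removing them lowers the number of $A$-vertices that must be avoided by exactly $\sigma$. This is precisely what makes the weaker hypothesis $\delta_G(V)\ge a-\sigma$ sufficient. One also checks that Step 3 never needs the bound on $\delta_G(V)$, because after Step 2 only $B$-vertices remain to be placed and the hypothesis $\delta_G(U)\ge b$ covers them.
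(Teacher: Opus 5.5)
Your proof is correct and follows the paper's argument. You use the same subtree $T'$, obtained by deleting $\mathcal S_T(t)$ and the leaves hanging off it. You embed it greedily from $v$ using $|A\cap V(T')|=a-\sigma(T_{a,b})\le\delta_G(V)$ and $\delta_G(U)\ge b$, then place $\mathcal S_T(t)$ among the at least $\sigma(T_{a,b})$ free neighbours of $v$, and finally attach the remaining $B$-leaves via $\delta_G(U)\ge b$. The only difference is that you make explicit the counting at each greedy step, which the paper leaves implicit: the breadth-first order and which previously embedded vertices must be avoided.
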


\begin{proof}
Set $\varphi(t)=v$. Define 
\[
T'=T_{a,b}-(\mathcal{S}_{T_{a,b}}(t)\cup (N_T(\mathcal{S}_{T_{a,b}}(t))\setminus\{t\})).
\]
That is, $T'$ is the subtree of $T_{a,b}$ with the leaves and `leaf-attachments' adjacent to $t$ removed.  We write $T'=(A',B')$ where $A'\subseteq A$ and $B'\subseteq B$.  We have 
\[
|A'|=a-\sigma(T_{a,b})\leq\delta_G(V),\quad \text{and}\quad|B'|\leq b\leq\delta_G(U),
\]
thus we may embed $T'$ greedily from $v$, embedding vertices of $A'$ to $U$ and consequently vertices of $B'$ to $V$.

We now extend $T'$ to $T_{a,b}$: $v$ has degree at least $a$, so as $T_{a,b}$ has $|A|=a$, we may embed each remaining neighbor of $t$ into $U$ greedily. We have thus embedded all of $A$ into $G$. The remaining vertices of $B$ are all leaves. Since $\delta_G(U)\geq b$ and $|B|=b$, they may be embedded greedily one at a time. We thus construct an embedding $\varphi$ of $T_{a,b}$ in $G$ with $\varphi(t)=v$ as desired.
\end{proof}

\noindent We now combine the minimality method with Lemma~\ref{lem:embedding} to prove the main theorem of the paper.

\begin{proof}[Proof of Theorem~\ref{thm:main}]
Let $G=(U,V)$ be bipartite with $|U|=n\ge m=|V|$, and suppose
\[
    e(G)>
    (b-1)n+\begin{cases}
    (a-b)m&\text{if}\ a\geq2b-1,\\
    (b-1)m&\text{if}\ a\leq2b-1.
    \end{cases}
\]
We consider two cases.

If $a\geq2b-1$, then we have $ e(G)>(b-1)|U|+(a-b)|V|.$ By Lemma~\ref{lem:finite-descent}, $G$ contains a $(b,a-b+1)$-minimal subgraph $G'=(U',V')$. Corollary~\ref{cor:degrees} gives
\[
    \delta_{G'}(U')\ge b,
    \qquad
    \delta_{G'}(V')\ge a-b+1,
    \qquad
    \Delta_{G'}(V')\ge a.
\]
Since $\sigma(T_{a,b})\ge b-1$, we have $a-\sigma(T_{a,b})\le a-b+1$.
Lemma~\ref{lem:embedding} therefore embeds $T_{a,b}$ in $G'$ and hence in $G$.

If $a\leq2b-1$, then we have $ e(G)>(b-1)|U|+(b-1)|V|.$ By Lemma~\ref{lem:finite-descent}, $G$ contains a $(b,b)$-minimal subgraph $G'=(U',V')$. Corollary~\ref{cor:degrees} gives
\[
    \delta_{G'}(U')\ge b,
    \qquad
    \delta_{G'}(V')\ge b,
    \qquad
    \Delta_{G'}(V')\ge 2b-1\geq a.
\]
Since $\sigma(T_{a,b})\ge a-b$, we have $a-\sigma(T_{a,b})\le b$.
Lemma~\ref{lem:embedding} therefore embeds $T_{a,b}$ in $G'$ and hence in $G$. Thus every graph with more than the claimed number of edges contains $T_{a,b}$.
\end{proof}

Theorem \ref{thm:main} is strong enough to resolve Conjecture \ref{conj:fixed-parts} for a large class of trees, including stars, paths, double-stars, brooms, all trees with part-sizes within specific ranges and all trees on at most seven vertices. Caro, Patk\'os and Tuza asked for the determination of both the ordinary and connected fixed-part bipartite extremal numbers for all trees on seven vertices. Our theorem settles the ordinary problem up to an additive constant. We write these well-known classes as a corollary:

\begin{proposition}
\label{prop:small-trees}
Theorem~\ref{thm:main} applies to every tree $T_{a,b}$ satisfying
$b\leq 2$, $a\leq b+1$ or $a\geq b(b-1)+1$, every double-star, every broom, and every tree on at most seven vertices.
\end{proposition}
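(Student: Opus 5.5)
The plan is to check, class by class, the hypothesis $\sigma(T_{a,b})\ge\min\{a-b,b-1\}$ of Theorem~\ref{thm:main}. Two facts will be used repeatedly. First, every $y\in A$ that is a leaf of $T$ lies in $\mathcal S_T(x)$ for its unique neighbour $x\in B$. Second, counting edges from the side $A$ gives $a+b-1=\sum_{y\in A}d_T(y)$. Consequently at most $b-1$ vertices of $A$ have degree at least $2$, and at least $a-b+1$ vertices of $A$ are leaves.

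\emph{Part-size conditions.} If $b=1$ then $\min\{a-b,b-1\}=0$, so there is nothing to check. If $b=2$ the required bound is at most $1$. When $a\ge 3$, the count above gives at least $a-1\ge 2$ leaves in $A$, so some $x\in B$ has a leaf neighbour and $\sigma\ge1$.

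If $a\le b+1$, the required bound is $a-b\in\{0,1\}$, so only $a=b+1$ needs work, and there the bound is $1$. Since $a-b+1=2\ge1$, the count gives a leaf in $A$, and its neighbour in $B$ witnesses $\sigma\ge1$.

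If $a\ge b(b-1)+1$ (with $b\ge2$), then $a\ge 2b-1$, so the bound to prove is $b-1$. Now $A$ contains at least $a-b+1\ge b^2-2b+2>b(b-2)$ leaves, distributed among the $b$ vertices of $B$. By pigeonhole some $x\in B$ is adjacent to at least $b-1$ of them, giving $\sigma\ge b-1$.

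\emph{Double-stars and brooms.} In a double-star the two centres are adjacent, so exactly one of them, say $x$, lies in $B$. Every neighbour of $x$ is either a leaf or the other centre, whose remaining neighbours are leaves. Hence $\mathcal S_T(x)=N_T(x)$, which is all of $A$, so $\sigma=a\ge b-1$.

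For a broom, let $Q=p_1\cdots p_kc$ be the path ending at the star centre $c$, and let $\ell_1,\dots,\ell_s$ be the leaves attached to $c$. There are two cases.
\begin{itemize}[leftmargin=*]
\item If $c\in B$, then every $\ell_i$ lies in $\mathcal S_T(c)$, so $\sigma\ge s$. Since $c\in B$ and the path $Q$ alternates between the parts, $|Q\cap A|\le|Q\cap B|$. Therefore $a-b\le s\le\sigma$.
\item If $c\in A$, then $p_k\in B$. Since $N_T(c)\setminus\{p_k\}$ consists of leaves, $c\in\mathcal S_T(p_k)$ and $\sigma\ge1$. Here the $\ell_i$ lie in $B$, so $a-b\le |Q\cap A|-|Q\cap B|-s\le 1$.
\end{itemize}
In either case $\sigma\ge a-b\ge\min\{a-b,b-1\}$.

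\emph{Trees on at most seven vertices.} Since $a+b\le7$ and $a\ge b$, either $b\le2$, or $b=3$ and $a\in\{3,4\}$, in which case $a\le b+1$. So these trees are already covered by the part-size conditions.

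The only step needing care is the broom case analysis. There the parity of the path $Q$ must be tracked to bound $a-b$ by the number of leaves at $c$, and the case $c\in A$ has to be handled separately through the preleaf $c\in\mathcal S_T(p_k)$.
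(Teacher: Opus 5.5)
Your proposal is correct and follows essentially the same route as the paper. It uses the same ingredients: the count $\sum_{y\in A}(d_T(y)-1)=b-1$, which gives at least $a-b+1$ leaves in $A$; pigeonhole over $B$ when $a\ge b(b-1)+1$; $\sigma=a$ for double-stars; and the reduction of trees on at most seven vertices to the cases $b\le 2$ or $a\le b+1$. Your parity argument for brooms is more careful than the paper's appeal to ``inspection.'' The paper's claimed exact values of $\sigma$ are not right in general: for the path $p_1p_2p_3c$ with three leaves attached at $c$, one has $\sigma=3=a-b$ even though no leaf lies in $B$. Your inequality $\sigma\ge a-b$ is exactly what is needed.
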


\begin{proof}

If $a=b$, then the condition in Theorem~\ref{thm:main} is vacuous.
Suppose that $a>b$. Since
\[
    \sum_{ x\in A}d_{T_{a,b}}(x)=e(T_{a,b})=a+b-1<2a,
\]
some vertex of $A$ is a leaf. Hence $\sigma(T_{a,b})\geq 1$.
It follows that the hypothesis of Theorem~\ref{thm:main} holds
whenever $b\leq 2$ or $a-b\leq 1$. 

Suppose $a>b(b-1)$. We will show that some vertex in $B$ has at least $b-1$ leaves. Let $\ell(v)$ be the number of pendant vertices adjacent to $v$. Observing
\[
\sum_{x\in A}(d_{T_{a,b}}(x)-1)=e(T_{a,b})-a=b-1,
\]
there can be at most $b-1$ vertices $x\in A$ with $d_{T_{a,b}}(x)-1\geq 1$. The result is that there are at least $a-b+1$ vertices $x\in A$ with $d_{T_{a,b}}(x)\leq1$, so there are at least $a-b+1>(b-1)^2$ leaves in the $A$ part of $T_{a,b}$. By averaging, there exists a vertex $v\in B$ with
\[
\ell(v)>\frac{(b-1)^2}{b}=b-2+\frac{1}{b}
\]
leaves. Since this number must be an integer, we have $\ell(v)\geq b-1$ and therefore $\sigma(T_{a,b})\geq b-1$.

If $T_{a,b}$ is a double-star, then $\sigma(T_{a,b})=a\geq a-b$. If $T_{a,b}$ is a broom, then inspection of the bipartition along
its handle shows that $\sigma(T_{a,b})=a-b$ when there is a leaf in $B$ and $\sigma(T_{a,b})=a-b+1$ otherwise.

Finally, suppose that $a+b\leq 7$. If $b\leq 2$, the result has
already been proved. If $b\geq 3$, then $a-b\leq 1$, since otherwise
$a\geq b+2$ and $a+b\geq 2b+2\geq 8$, a contradiction.

Together these conditions yield the desired result.
\end{proof}

An immediate corollary of this result is that for each fixed integer $b$, Conjecture~\ref{conj:fixed-parts} holds for every tree $T_{a,b}$ with $a$ outside of the finite range $b+2\leq a\leq b(b-1)$. The smallest tree for which Theorem \ref{thm:main} does not apply is the tree constructed by taking a path on seven vertices and attaching a leaf to the center vertex. Expanding the class of trees for which we know Conjecture~\ref{conj:fixed-parts} holds is of primary interest to the authors of this paper.

\begin{corollary}
\label{cor:order-main}
Under the hypotheses of Theorem~\ref{thm:main},
\[
    \exbip(N;T_{a,b})\le\begin{cases}
        \frac12(a-1)N&\text{if}\ a\geq2b-1,\\
        (b-1)N&\text{if}\ a\leq2b-1.
    \end{cases}
\]
\end{corollary}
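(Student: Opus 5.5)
The plan is to derive the corollary directly from Theorem~\ref{thm:main}: take a bipartite $T_{a,b}$-free graph $G$ on $N$ vertices, bound $e(G)$ using its part sizes, and then maximize that bound over all splits of $N$. Let $G=(U,V)$ with $|U|=n\ge m=|V|$ and $m+n=N$. If $m=0$ then $e(G)=0$ and there is nothing to prove, so we may assume $m\ge 1$. Theorem~\ref{thm:main} gives
\[
e(G)\le (b-1)n+c\,m,
\]
where $c=a-b$ if $a\ge 2b-1$ and $c=b-1$ if $a\le 2b-1$.

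In the case $a\le 2b-1$ we have $c=b-1$, so the bound reads $e(G)\le (b-1)(n+m)=(b-1)N$, which is exactly the claim.

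In the case $a\ge 2b-1$ we have $c=a-b\ge b-1$, and the bound $(b-1)n+(a-b)m$ is not symmetric in $n$ and $m$. Since $n\ge m$, rearranging the coefficients can only increase the expression, so
\[
(b-1)n+(a-b)m\le \tfrac12(a-1)(n+m)=\tfrac12(a-1)N.
\]
To verify this, note that $\tfrac12(a-1)=\tfrac12\bigl((b-1)+(a-b)\bigr)$. The difference $\tfrac12(a-1)(n+m)-(b-1)n-(a-b)m$ equals
\[
\tfrac12(a-b-(b-1))(n-m)=\tfrac12(a-2b+1)(n-m),
\]
which is nonnegative because $a\ge 2b-1$ and $n\ge m$. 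Hence $e(G)\le \tfrac12(a-1)N$.

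The only step needing care is this averaging argument in the unbalanced case: it is the exchange inequality that uses both $a\ge 2b-1$ and $n\ge m$. At the boundary $a=2b-1$ the two bounds agree, since $\tfrac12(a-1)=b-1$, so the case split is consistent. Finally, Theorem~\ref{thm:main} is stated for all $n\ge m$ with no lower bound on $m$, so it applies to every split of $N$ and no edge cases remain.
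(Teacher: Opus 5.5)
Your proposal is correct and is essentially the paper's own proof: you apply Theorem~\ref{thm:main} to a bipartition with $|U|=n\ge m=|V|$, read off $(b-1)N$ directly when $a\le 2b-1$, and when $a\ge 2b-1$ you use the exchange inequality $(b-1)n+(a-b)m\le\frac12(a-1)(n+m)$, which the paper justifies by the same two facts, $n\ge m$ and $a-b\ge b-1$. Your computation of the difference as $\frac12(a-2b+1)(n-m)$ and your separate handling of $m=0$ simply make that same argument slightly more explicit.
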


\begin{proof}
Let $G=(U,V)$ be bipartite on $N$ vertices, with $|U|=n\ge m=|V|$. If $a\le 2b-1$, the bound in Theorem~\ref{thm:main} is $(b-1)N$. If $a\ge 2b-1$, then
\[
(b-1)n+(a-b)m\le \frac{a-1}{2}(m+n)=\frac{a-1}{2}N,
\]
where the inequality follows from $n\ge m$ and $a-b\ge b-1$. Applying Theorem~\ref{thm:main} proves the result.
\end{proof}

\begin{remark}
\label{rmk:fixed-implies-order}
    The preceding argument shows that for any fixed tree $T_{a,b}$ if Conjecture~\ref{conj:fixed-parts} holds for $T_{a,b}$ then Conjecture~\ref{conj:order} holds for $T_{a,b}$.
\end{remark}

\noindent We also have the following resulting corollary for all trees:
\begin{corollary}
\label{cor:weak-general}
For any tree $T_{a,b}$ and any $n\geq m$,
\[
    \exbip(m,n;T_{a,b})\le(b-1)n+\begin{cases}
    (b-1)m&\text{if}\ a\leq2b-1,\ \sigma(T_{a,b})\geq a-b,\\
    (a-b)m&\text{if}\ a\geq2b-1,\ \sigma(T_{a,b})\geq b-1,\\
    (a-1-\sigma(T_{a,b}))m&\text{otherwise}.
    \end{cases}
\]
\end{corollary}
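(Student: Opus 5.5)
The first two cases are exactly Theorem~\ref{thm:main}: in each, the hypothesis on $\sigma(T_{a,b})$ is what that theorem's proof uses. So the only new content is the ``otherwise'' case, and I would rerun the minimality argument from the proof of Theorem~\ref{thm:main} with a different weight pair. Write $\sigma=\sigma(T_{a,b})$. Let $G=(U,V)$ have $|U|=n\ge m=|V|$ and
\[
e(G)>(b-1)n+(a-1-\sigma)m .
\]
I would take $r=b$ and $s=a-\sigma$, so that $e(G)>(r-1)|U|+(s-1)|V|$.

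Definition~\ref{def:weighted-minimality} requires $r\le s$, that is, $\sigma\le a-b$. In the ``otherwise'' case either $a\le 2b-1$ and $\sigma<a-b$, or $a\ge 2b-1$ and $\sigma<b-1\le a-b$. Both give $\sigma\le a-b$, so $r\le s$ holds. By Lemma~\ref{lem:finite-descent}, $G$ contains a $(b,a-\sigma)$-minimal subgraph $G'=(U',V')$.

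Corollary~\ref{cor:degrees} then gives
\[
\delta_{G'}(U')\ge b,\qquad \delta_{G'}(V')\ge a-\sigma,\qquad \Delta_{G'}(V')\ge b+a-\sigma-1 .
\]
Lemma~\ref{lem:embedding} needs $\delta(U')\ge b$, $\delta(V')\ge a-\sigma$, and a vertex $v\in V'$ of degree at least $a$. The first two are immediate. For the third, $b+a-\sigma-1\ge a$ holds iff $\sigma\le b-1$.

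That inequality is the step that needs an argument. In the branch $a\ge 2b-1$, $\sigma<b-1$, it is immediate. In the branch $a\le 2b-1$, $\sigma<a-b$, we have $\sigma<a-b\le b-1$, so it holds again. Hence in every ``otherwise'' situation some $v\in V'$ has $d_{G'}(v)\ge a$.

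Lemma~\ref{lem:embedding}, applied with $t\in B$ realizing $\sigma$, now embeds $T_{a,b}$ into $G'\subseteq G$. This contradicts $T_{a,b}$-freeness, which proves the bound. The main obstacle is only the bookkeeping above: checking $r\le s$ and $\sigma\le b-1$ in each subcase. Both reduce to $\sigma\le\min\{a-b,b-1\}$, which is exactly the failure of the hypothesis of Theorem~\ref{thm:main}.
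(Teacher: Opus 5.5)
Your proof is correct, but it is organized differently from the paper's. The paper does not rerun the minimality argument. In the ``otherwise'' case it attaches $b-1-\sigma$ new leaves to a vertex $x\in B$ realizing $\sigma$. This produces a supertree $T'\supseteq T_{a,b}$ with parts of sizes $a'=a+b-1-\sigma\ge 2b$ and $b$, and with $\sigma(T')\ge b-1$. The paper then applies the first case of Theorem~\ref{thm:main} to $T'$ as a black box, obtaining $(b-1)n+(a'-b)m=(b-1)n+(a-1-\sigma)m$. This bound transfers to $T_{a,b}$ because every $T_{a,b}$-free graph is $T'$-free.

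Underneath, the two arguments use the same weight pair, since $(b,a'-b+1)=(b,a-\sigma)$. The paper embeds the larger tree $T'$ and so uses the full bound $\Delta_{G'}(V')\ge a+b-1-\sigma=a'$. You embed $T_{a,b}$ directly and need only a vertex of degree $a$. The price is your explicit checks $b\le a-\sigma$ (so that Corollary~\ref{cor:degrees} applies) and $\sigma\le b-1$.

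The paper's reduction is shorter given Theorem~\ref{thm:main}, and it shows that bounds pass from a supertree down to the trees it contains. Your version avoids the auxiliary tree. It also proves the bound $(b-1)n+(a-1-\sigma)m$ whenever $\sigma\le\min\{a-b,b-1\}$; only this weak inequality is needed, not the strict failure of the theorem's hypothesis. In the boundary case $\sigma=\min\{a-b,b-1\}$ it reproduces the bounds of Theorem~\ref{thm:main} itself.
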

\begin{proof}
    If $\sigma(T_{a,b})\geq\min\{a-b,b-1\}$, then we apply Theorem \ref{thm:main}. 
    
    \noindent If instead $\sigma(T_{a,b})<\min\{a-b,b-1\}$, we construct a tree $T'$ by attaching $b-1-\sigma(T_{a,b})$ leaves to a vertex $x\in B$ achieving $|\mathcal S_{T_{a,b}}(x)|=\sigma(T_{a,b})$. Thus $T_{a,b}\subseteq T'$, $T'$ has parts $A'$ and $B'$ with $|A'|=a+b-1-\sigma(T_{a,b})$ and $|B'|=b$, and we have
    \[
    \sigma(T')\geq\sigma(T_{a,b})+(b-1-\sigma(T_{a,b}))=b-1.
    \]
    Writing $a':=|A'|=a+b-1-\sigma(T_{a,b})$ we have $\sigma(T_{a,b})<a-b$ and hence $a'\geq 2b$. Thus the first case of Theorem~\ref{thm:main} applies to $T'$. Since $a'-b=a-1-\sigma(T_{a,b})$, we obtain
    \[
    \exbip(m,n;T')\leq(b-1)n+(a-1-\sigma(T_{a,b}))m.
    \]
    Observing that graphs which are $T_{a,b}$-free must also be $T'$-free, we achieve
    \[
    \exbip(m,n;T_{a,b})\leq(b-1)n+(a-1-\sigma(T_{a,b}))m.
    \]
    Combining our two cases yields the desired result.
\end{proof}

\noindent Khormali \footnote{Personal communication of a preprint titled ``Bipartite Tur\'an Numbers of Trees and Star Forests'' by O. Khormali. The result cited here is Theorem 1.2 in his manuscript. It is now available at https://arxiv.org/pdf/2608.01873} has independently obtained, by a different method, that for
$b\geq 2$ and
\[
    n\geq a(m-b+2)\binom{m}{b-1},
\]
one has
\[
    \exbip(m,n;T_{a,b})\leq(b-1)n+\binom{m}{b-1}^2(a-1)+m\left\lfloor\frac{m}{b-1}\right\rfloor a+ma(m-b+2)\binom{m}{b-1}.
\]
For fixed $m$, both this bound and Corollary~\ref{cor:weak-general}
have the leading term $(b-1)n$. Our result additionally gives an explicit linear dependence on $m$. Moreover, we do not assume any non-trivial condition on $n$.

\section{Related Results}
\label{sec:related}

We obtain some related results, the first of which is a comparison of bipartite extremal number to the classical extremal number via a random bipartite subgraph.

\begin{proposition}
\label{prop:random-bipartition}
    For any tree $T_{a,b}$ and positive integers $n\geq m$, 
    \[
    \ex(m+n;T_{a,b})\leq \frac{(m+n)(m+n-1)}{2mn}\,\exbip(m,n;T_{a,b}).
    \]
\end{proposition}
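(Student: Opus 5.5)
The plan is a standard averaging argument over a uniformly random bipartition with prescribed part-sizes. Let $N=m+n$ and let $G$ be an extremal $T_{a,b}$-free graph on $N$ vertices, so $e(G)=\ex(N;T_{a,b})$. Choose a subset $M\subseteq V(G)$ of size $m$ uniformly at random among all $\binom{N}{m}$ subsets, and set $R=V(G)\setminus M$, so $|R|=n$. Let $G_M$ be the bipartite subgraph of $G$ consisting of exactly those edges with one endpoint in $M$ and the other in $R$. Then $G_M$ is a bipartite graph with part-sizes $m$ and $n$. Since it is a subgraph of the $T_{a,b}$-free graph $G$, it is itself $T_{a,b}$-free, and hence $e(G_M)\le \exbip(m,n;T_{a,b})$ for every choice of $M$.

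The key computation is the probability that a fixed edge $uv$ of $G$ crosses the partition. The edge crosses exactly when one endpoint lies in $M$ and the other in $R$. By symmetry,
\[
\Pr[u\in M,\ v\in R]=\frac{m}{N}\cdot\frac{n}{N-1},
\]
and the same holds with $u$ and $v$ exchanged. So each edge survives with probability $\frac{2mn}{N(N-1)}$. By linearity of expectation,
\[
\mathbb E\,e(G_M)=\frac{2mn}{(m+n)(m+n-1)}\,e(G).
\]
Some choice of $M$ attains at least this expectation, which gives
\[
\frac{2mn}{(m+n)(m+n-1)}\,\ex(m+n;T_{a,b})\le \exbip(m,n;T_{a,b}).
\]
Rearranging yields the claimed inequality.

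There is no real obstacle here; the only point needing care is the crossing probability, namely drawing $M$ without replacement so that the factor is $n/(N-1)$ rather than $n/N$. This is exactly what produces the factor $(m+n-1)$ in the statement. One should also note that $\ex$ of a tree is attained by some finite graph, which is trivial, and that the argument uses no property of trees beyond closure of $T_{a,b}$-freeness under taking subgraphs.
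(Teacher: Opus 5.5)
Your proposal is correct and follows the paper's argument: a uniformly random $m$-subset, a crossing probability of $\frac{2mn}{(m+n)(m+n-1)}$ per edge, linearity of expectation, and the observation that the crossing subgraph is $T_{a,b}$-free. The only cosmetic difference is that the paper computes $\Pr[u\in X,\ v\in Y]$ as $\binom{m+n-2}{m-1}/\binom{m+n}{m}$, while you write it as the sequential product $\frac{m}{N}\cdot\frac{n}{N-1}$; the two are equal.
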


\begin{proof}
    Consider a graph $G$ on $m+n$ vertices which is $T_{a,b}$-free and has $\ex(m+n;T_{a,b})$ edges. Randomly select $X\subseteq V(G)$ with $|X|=m$. Write $G[X,Y]$ for the subgraph of $G$ which consists of only the edges between $X$ and $Y=V(G)\setminus X$. Observe that $G[X,Y]$ is $T_{a,b}$-free. For any edge $uv\in E(G)$, \begin{align*}
        \Pr(uv\in G[X,Y])&=\Pr(u\in X,\ v\in Y)+\Pr(v\in X,\ u\in Y)\\
        &=2\Pr(u\in X,\ v\in Y)\text{ by symmetry.}\\
        &=2\binom{m+n-2}{m-1}/\binom{m+n}{m}\text{ by simple counting.}\\
        &=\frac{2mn}{(m+n)(m+n-1)}.
    \end{align*}
    
     Observe $e(G)\Pr(uv\in G[X,Y])=\mathbb{E}(e(G[X,Y]))$ by linearity of expectation, since there are $e(G)$ edges in $G$, each with $\Pr(uv\in G[X,Y])$ probability of being in $G[X,Y]$. So $G$ has a subgraph $G'$ with parts $m$ and $n$ such that 
     \[
     \exbip(m,n;T_{a,b})\geq e(G')\geq e(G)\Pr(uv\in G[X,Y])=\frac{2mn}{(m+n)(m+n-1)}\ex(m+n;T_{a,b}).
     \]
     Rearranging yields the desired bound.
\end{proof}

Even assuming Conjecture~\ref{conj:fixed-parts}, this random-bipartition comparison does not in general recover the conjectured Erd\H{o}s--S\'os bound for $T_{a,b}$. It is expected to be most useful when the two part-sizes of the tree are highly unbalanced. It would be of interest to tighten the relationship between these two parameters.

\medskip

We now move to discussing the oriented extremal parameter. Let $G=(M,N)$ be a bipartite graph with $|M|=m$ and $|N|=n$. Define the \textit{oriented bipartite extremal number} of $H$, $\exor(m,n;H_{a,b})$, to be the maximum number of edges in such a graph $G$ containing no copy of $H_{a,b}$ in which the $b$-vertex part is embedded in $M$ and the $a$-vertex part is embedded in $N$. When $H_{a,b}$ is complete bipartite, this is the usual Zarankiewicz function.

\begin{remark}
    If a bipartite graph $H_{a,b}$ has an automorphism, that is, an edge-preserving bijection on $V(H_{a,b})$, which switches the parts of $H_{a,b}$, then $\exbip(m,n;H_{a,b})=\exor(m,n;H_{a,b})$ for any $n\geq m$.
\end{remark}

\begin{proposition}
\label{prop:exor-LB}
        For any connected bipartite graph $H_{a,b}$, if $m\ge b-1$ and $n\ge a-1$, then
    \[
        \exor(m,n;H_{a,b})\ge\max\{(b-1)n,\ (a-1)m,\ (b-1)n+(a-1)m-2(a-1)(b-1)\}.
    \]
\end{proposition}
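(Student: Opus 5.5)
We prove the lower bound by exhibiting three host graphs $G=(M,N)$ with $|M|=m$, $|N|=n$, one for each term of the maximum. Each graph will contain no copy of $H_{a,b}$ with its $b$-part in $M$ and its $a$-part in $N$. Since $H_{a,b}$ is connected, any copy of it lies inside a single connected component of $G$. Moreover, its $b$-vertex part must land inside that component's intersection with $M$, and its $a$-vertex part inside the component's intersection with $N$. So it suffices that every component $C$ satisfies $|C\cap M|\le b-1$ or $|C\cap N|\le a-1$.

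For the first term, take $G=K_{b-1,n}$ with the $(b-1)$-part inside $M$, which is possible since $m\ge b-1; the remaining $m-b+1$ vertices of $M$ are isolated. The only nontrivial component meets $M$ in $b-1$ vertices, so the oriented copy cannot occur, and $e(G)=(b-1)n$. Symmetrically, using $n\ge a-1$, take $K_{m,a-1}$ with its $(a-1)$-part in $N$ and the rest of $N$ isolated. This gives $(a-1)m$ edges, and the only component meets $N$ in $a-1$ vertices.

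For the third term, we mirror the fixed-part construction of Proposition~\ref{prop:lower-bounds}. Take the disjoint union of $K_{b-1,\,n-a+1}$ and $K_{m-b+1,\,a-1}$. The first component has $b-1$ vertices in $M$ and $n-a+1$ vertices in $N$; the second has $m-b+1$ vertices in $M$ and $a-1$ vertices in $N$. The vertex counts are consistent: $(b-1)+(m-b+1)=m$ and $(n-a+1)+(a-1)=n$, and all part sizes are nonnegative by the hypotheses. The first component meets $M$ in only $b-1$ vertices and the second meets $N$ in only $a-1$ vertices, so neither contains an oriented copy. The edge count is
\[
(b-1)(n-a+1)+(a-1)(m-b+1)=(b-1)n+(a-1)m-2(a-1)(b-1).
\]
Taking the best of the three constructions gives the stated maximum.

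There is no real obstacle here. The only point requiring care is the orientation argument: a connected $H_{a,b}$ must lie in one component, and its bipartition is forced to align with the host's bipartition. Degenerate cases, such as $m=b-1$ or $n=a-1$, which make some component empty, need a brief check, but they cause no difficulty.
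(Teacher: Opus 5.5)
Your proof is correct and is essentially the paper's argument: the same three graphs $K_{b-1,n}$, $K_{m,a-1}$ and $K_{b-1,n-a+1}\,\dot\cup\,K_{m-b+1,a-1}$, with $H_{a,b}$-freeness coming from connectivity, which forces any oriented copy into a single component having fewer than $b$ vertices in $M$ or fewer than $a$ vertices in $N$. Your explicit edge-count computation and your remark on the degenerate cases $m=b-1$ and $n=a-1$ only make explicit what the paper leaves implicit.
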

\begin{proof}
    Let $G_1=K_{b-1,n-a+1}\dot\cup K_{m-b+1,a-1},\ G_2=K_{b-1,n}$, and $G_3=K_{m,a-1}$
where for $G_1$ the part of size $b-1$ in the first component and the part of size $m-b+1$ in the second lie in $M$. We complete $G_2$ and $G_3$ with isolated vertices to have parts of size $m$ and $n$. Every component in these three graphs either has fewer than $b$ vertices in $M$ or fewer than $a$ vertices in $N$. Thus $G_1, G_2$ and $G_3$ each contain no copy of $H_{a,b}$ with the $b$-vertex part embedded in $M$. We count
    \[
        e(G_1)=(b-1)(n-a+1)+(a-1)(m-b+1),\quad e(G_2)=(b-1)n,\quad e(G_3)=(a-1)m.
    \]
Maximizing over the lower bounds yields the desired result.
\end{proof}

\begin{proposition}
\label{prop:exor-UB}
    For all trees $T_{a,b}$ and all $m\geq b-1$, $n\geq a-1$,
    \[
    \exor(m,n;T_{a,b})\leq(a-1)m+(b-1)n-(a-1)(b-1).
    \]
\end{proposition}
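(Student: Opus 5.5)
The plan is a degeneracy (peeling) argument. Throughout, an \emph{oriented copy} of $T_{a,b}$ means a copy with $B$ embedded in $M$ and $A$ embedded in $N$. We show that a graph with no oriented copy can be reduced to the empty graph by repeatedly deleting a vertex of $M$ of current degree at most $a-1$, or a vertex of $N$ of current degree at most $b-1$. We then bound the number of edges destroyed by such a deletion sequence with a potential function that is exact on small graphs.

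\emph{Step 1: a nonempty core forces $T_{a,b}$.} Suppose $G=(M,N)$ has a nonempty subgraph $G'$ with $d_{G'}(u)\ge a$ for all $u\in V(G')\cap M$ and $d_{G'}(w)\ge b$ for all $w\in V(G')\cap N$. We embed $T_{a,b}$ greedily in a breadth-first order from any root in $B$, sending the root to a vertex of $G'$ in $M$. When a child in $A$ of an embedded $x\in B$ is placed, the image of $x$ has at least $a$ neighbours in $N$. At most $a-1$ of them are already used, since all used vertices of $N$ are images of $A$. The symmetric argument applies to children in $B$ of vertices in $A$, using $b$ in place of $a$. So $G'$, and hence $G$, contains an oriented copy. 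Consequently, if $G$ has no oriented copy, the peeling process must delete every vertex.

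\emph{Step 2: a potential function.} For integers $k,l\ge 0$ define
\[
h(k,l)=(a-1)k+(b-1)l-(a-1)(b-1)+(a-1-l)^+(b-1-k)^+ .
\]
We claim, by induction on $k+l$, that any bipartite graph with parts of sizes $k$ (in $M$) and $l$ (in $N$) that peels down to nothing has at most $h(k,l)$ edges. The base case $k=0$ or $l=0$ gives $h\ge 0$, which is trivial; one checks $h(0,l)=(b-1)(l-a+1)+(a-1-l)^+(b-1)\ge 0$. For the inductive step, suppose the first deleted vertex lies in $M$. Its degree is at most $\min(a-1,l)$, so it suffices to check that
\[
h(k,l)-h(k-1,l)=(a-1)-(a-1-l)^+\cdot[k\le b-1]\ge \min(a-1,l).
\]
When $k\le b-1$ this is equality; otherwise the difference is $a-1$. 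The case where the first deleted vertex lies in $N$ is symmetric, with the roles of $a-1$ and $b-1$ exchanged. Finally, for $m\ge b-1$ and $n\ge a-1$ the correction term $(a-1-n)^+(b-1-m)^+$ vanishes, giving $e(G)\le (a-1)m+(b-1)n-(a-1)(b-1)$.

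The main obstacle is capturing the saving of $(a-1)(b-1)$. A naive count gives only $(a-1)m+(b-1)n$, so the point is to use the sharper degree bounds $\min(a-1,|N_{\mathrm{cur}}|)$ and $\min(b-1,|M_{\mathrm{cur}}|)$ near the end of the peeling. Choosing $h$ so that it equals $kl$ on the box $k\le b-1$, $l\le a-1$ makes the induction close cleanly. The only care needed is verifying the increment identity in the four sign regimes of $(a-1-l)^+$ and $(b-1-k)^+$.
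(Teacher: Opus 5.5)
Your proof is correct and is essentially the paper's argument in dual form: the paper takes a minimal subgraph $G'=(M',N')$ satisfying $e(G')>(a-1)|M'|+(b-1)|N'|-(a-1)(b-1)$ with $|M'|\ge b-1$ and $|N'|\ge a-1$. It rules out the threshold sizes via $e(G')\le |M'||N'|$, deduces $\delta_{G'}(M')\ge a$ and $\delta_{G'}(N')\ge b$ by vertex deletion, and embeds greedily, which is the minimal-counterexample form of your peeling induction. The only difference is bookkeeping: the paper builds the size thresholds into the minimality property, which lets it avoid your correction term $(a-1-l)^+(b-1-k)^+$ and the four-regime check that extending the potential to all $(k,l)$ requires.
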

\begin{proof}
    Let $G=(M,N)$ be a bipartite graph such that
    \[
    e(G)>(a-1)m+(b-1)n-(a-1)(b-1).
    \]
    Say a subgraph $G'=(M',N')$ of $G$ with $M'\subseteq M$ and $N'\subseteq N$ has property $P$ if 
    \[
    e(G')>(a-1)|M'|+(b-1)|N'|-(a-1)(b-1),\quad |N'|\geq a-1\quad\text{and}\quad |M'|\geq b-1.
    \]
    
    \noindent Consider a $P$-minimal subgraph $G'=(M',N')$: If $|M'|= b-1$, then 
    \[
    e(G')>(a-1)|M'|+(b-1)|N'|-(a-1)(b-1)=(b-1)|N'|,
    \]
    which is impossible as $|M'|=b-1$. Thus $|M'|>b-1$ and similarly $|N'|>a-1$.

    Now if there were a vertex $v\in M'$ with $d_{G'}\leq a-1$, then
    \begin{align*}
        e(G'-\{v\})&\geq e(G')-(a-1)\\
        &>(a-1)(|M'|-1)+(b-1)|N'|-(a-1)(b-1),
    \end{align*}
    which would imply $G'-\{v\}$ has the property $P$, since $|M'|-1\geq b-1$. Since $G'$ is $P$-minimal, this is impossible and therefore $\delta_{G'}(M')\geq a$, and by a symmetric argument we have $\delta_{G'}(N')\geq b$. We thus embed $T_{a,b}$ in $G'\subseteq G$ greedily: vertices of $B$ are embedded in $M'$ while vertices of $A$ are embedded in $N'$. 
\end{proof}

Together with Proposition \ref{prop:exor-LB} we leave open only a gap of at most $(a-1)(b-1)$ between our lower and upper bounds for $\exor(m,n;T_{a,b})$ for each tree $T_{a,b}$.

\section{Conclusion}

We used a simple generalization of classical $k$-minimality to yield strong asymmetric degree conditions on the two sides of a bipartite graph. Theorem~\ref{thm:main} shows that these conditions are sufficient to embed trees with a vertex in the smaller part adjacent to sufficiently many leaves or pendant preleaves in its larger part. The result is a resolution of a part of a problem of Caro, Patk\'os and Tuza up to additive constants. We related this problem to the Erd\H{o}s--S\'os Conjecture. We then obtained bounds on a related problem generalizing the Zarankiewicz function. For the oriented parameter, we obtain upper and lower bounds differing by at most a constant term $(a-1)(b-1)$.

Several natural problems remain.

\begin{enumerate}[label=\textup{(\roman*)}]
    \item Determine whether Conjectures~\ref{conj:order} and \ref{conj:fixed-parts} hold for every tree, or for larger classes of trees. 

    \item Tighten the relationship between Conjecture \ref{conj:fixed-parts} and the Erd\H{o}s--S\'os Conjecture.

    \item Further investigate $k$-partite or connected analogues of this problem.

    \item Determine the optimal additive terms in Conjectures~\ref{conj:order} and \ref{conj:fixed-parts}.
\end{enumerate}

We would also like to raise the following question:
\begin{question}
    For every fixed tree $T_{a,b}$, do there exist positive integers $m_0,n_0>0$ such that for all $m\geq m_0$ and $n\geq n_0$,
    \[
    \exor(m,n;T_{a,b})=(b-1)n+(a-1)m-2(a-1)(b-1)?
    \]
\end{question}

\section*{Acknowledgments}

The authors thank Professor Omid Khormali for helpful discussions on this topic, reviewing an earlier version of this manuscript, and giving detailed feedback. We note that Khormali independently investigated the same problem using a distinct approach after the majority of our work was completed.

\section*{Declaration of generative AI and AI-assisted technologies in the manuscript preparation process}

During the preparation of this work the authors used ChatGPT-5.6 Sol in order to improve readability of the manuscript and put references in the required format. After using this tool/service, the authors reviewed and edited the content as needed and take full responsibility for the content of the published article.

\end{document}